\renewcommand{\labelenumi}{\rm(\theenumi)}
\theoremstyle{plain}
\newtheorem{theorem}{Theorem}
\newtheorem{proposition}{Proposition}
\newtheorem{lemma}{Lemma}
\newtheorem{main}{Main Theorem}
\theoremstyle{remark}
\theoremstyle{definition}
\newcommand{\R}{\mathbb{R}}                       
\begin{document}

\title[The topological classification of spaces of metrics]{The topological classification of spaces of metrics with the uniform convergence topology}
\author{Katsuhisa Koshino}
\address[Katsuhisa Koshino]{Faculty of Engineering, Kanagawa University, Yokohama, 221-8686, Japan}
\email{ft160229no@kanagawa-u.ac.jp}
\subjclass[2020]{Primary 54C35; Secondary 57N20, 54E35, 54E40, 54E45, 52A07.}
\keywords{pseudometric, admissible metric, the uniform convergence topology, sup-metric, Fr\'{e}chet space, convex set, Hilbert space}
\maketitle

\begin{abstract}
For a metrizable space $X$ of density $\kappa$, let $PM(X)$ be the space of continuous bounded pseudometrics on $X$ endowed with the uniform convergence topology.
In this paper, its topology shall be classified as follows:
(i)~If $X$ is finite,
 then $PM(X)$ is homeomorphic to $\{0\}$ when $X$ is a singleton,
 and then $PM(X)$ is homeomorphic to $[0,1]^{\kappa(\kappa - 1)/2 - 1} \times [0,1)$ when $\kappa > 1$;
(ii)~If $X$ is infinite and generalized compact,
 then $PM(X)$ is homeomorphic to the Hilbert space $\ell_2(2^{< \kappa})$ of density $2^{< \kappa}$;
(iii)~If $X$ is not generalized compact,
 then $PM(X)$ is homeomorphic to the Hilbert space $\ell_2(2^\kappa)$ of density $2^\kappa$.
Furthermore, letting $M(X)$ and $AM(X)$ be the spaces of continuous bounded metrics and bounded admissible metrics on $X$ with the subspace topology of $PM(X)$ respectively, we will recognize their topological types as follows:
(iv)~If $X$ is infinite and compact,
 then $M(X) (= AM(X))$ is homeomorphic to the separable Hilbert space $\ell_2$;
(v)~In the case that $X$ is not compact, $M(X)$ is homeomorphic to the Hilbert space $\ell_2(2^{\aleph_0})$ if $X$ is $\sigma$-compact,
 and moreover $AM(X)$ is also homeomorphic to the Hilbert space $\ell_2(2^{\aleph_0})$ if $X$ is separable locally compact.
\end{abstract}

\section*{}

Given a metrizable space $X$, we denote by $C(X^2)$ the space of continuous bounded real-valued functions on $X^2$ equipped with the uniform convergence topology.
The space $C(X^2)$ is a Banach space with the sup-norm,
 and then the sup-metric $D$ is admissible on $C(X^2)$:
 $$D(f,g) = \sup\{|f(x,y) - g(x,y)| \mid (x,y) \in X^2\}$$
 for any $f, g \in C(X^2)$.
Let $PM(X)$, $M(X)$ and $AM(X)$ be the spaces consisting of continuous bounded pseudometrics, continuous bounded metrics and bounded admissible metrics on $X$ with the subspace topology of $C(X^2)$, respectively.
As is easily observed,
 $PM(X)$ is a convex non-negative cone,
 and $M(X)$ and $AM(X)$ are convex positive cones, in the linear space $C(X^2)$.
Note that $M(X)$ coincides with $AM(X)$ when $X$ is compact.
In this paper, we shall classify the topology of $PM(X)$ as follows:

\begin{main}
Let $X$ be a metrizable space of density $\kappa$.
\begin{enumerate}
 \renewcommand{\labelenumi}{(\roman{enumi})}
 \item If $X$ is finite,
 then $PM(X)$ is homeomorphic to $\{0\}$ when $X$ is a singleton,
 and then $PM(X)$ is homeomorphic to $[0,1]^{\kappa(\kappa - 1)/2 - 1} \times [0,1)$ when $X$ is not a singleton;
 \item If $X$ is infinite and generalized compact,
 then $PM(X)$ is homeomorphic to the Hilbert space $\ell_2(2^{< \kappa})$ of density $2^{< \kappa}$;
 \item If $X$ is not generalized compact,
 then $PM(X)$ is homeomorphic to the Hilbert space $\ell_2(2^\kappa)$ of density $2^\kappa$.
\end{enumerate}
In particular, in the case where $X$ is separable, $PM(X)$ is homeomorphic to the separable Hilbert space $\ell_2$ when $X$ is infinite and compact,
 and $PM(X)$ is homeomorphic to the Hilbert space $\ell_2(2^{\aleph_0})$ when $X$ is not compact.
\end{main}

Furthermore, we will investigate the topological types of $M(X)$ and $AM(X)$ as follows:

\begin{theorem}\label{metric}
If $X$ is an infinite and compact metrizable space,
 then $M(X) (= AM(X))$ is homeomorphic to the separable Hilbert space $\ell_2$.
In the case that $X$ is not compact, $M(X)$ is homeomorphic to the Hilbert space $\ell_2(2^{\aleph_0})$ when $X$ is $\sigma$-compact,
 and moreover $AM(X)$ is also homeomorphic to the Hilbert space $\ell_2(2^{\aleph_0})$ when $X$ is separable locally compact.
\end{theorem}

A space $X$ is \emph{generalized compact} if every open cover of $X$ has a subcover whose cardinality is less than the density of $X$.
Recall that $PM(X)$ is closed in $C(X^2)$.
If $X$ is a $\sigma$-compact metrizable space,
 then $M(X)$ is $G_\delta$ in $C(X^2)$,
 and more if it is a separable locally compact metrizable space,
 then $AM(X)$ is also $G_\delta$, refer to the proof of \cite[Lemma~5.1]{Ish}.
T.~Dobrowolski and H.~Toru\'nczyk \cite[Theorem~2]{DT2}, and T.~Banakh and R.~Cauty \cite[Theorem~2]{BanC} gave a sufficient condition for $G_\delta$ convex sets in Fr\'{e}chet spaces to be homeomorphic to Hilbert spaces as follows:

\begin{theorem}\label{convex}
Let $C$ be a $G_\delta$ convex set in a Fr\'{e}chet space.
If the closure of $C$ is not locally compact,
 then $C$ is homeomorphic to a Hilbert space.
Especially, if $C$ is not separable,
 then $C$ is homeomorphic to a non-separable Hilbert space.
\end{theorem}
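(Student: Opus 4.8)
The plan is to apply Toru\'nczyk's topological characterization of the Hilbert spaces $\ell_2(\kappa)$: a completely metrizable AR of weight $\kappa \geq \aleph_0$ is homeomorphic to $\ell_2(\kappa)$ if and only if it satisfies the discrete approximation property. I would first dispose of the two easy hypotheses. Since a Fr\'echet space is completely metrizable and $C$ is $G_\delta$ in it, and since $G_\delta$ subsets of completely metrizable spaces are completely metrizable, $C$ is itself completely metrizable. Since $C$ is convex in a locally convex space, Dugundji's extension theorem shows that $C$ is an AR. Setting $\kappa = w(C)$, which equals the density of $C$ because $C$ is metrizable, it then remains only to verify the discrete approximation property; once this is done, Toru\'nczyk's theorem yields $C \cong \ell_2(\kappa)$ directly, with no separate manifold-to-model step, because $C$, being an AR, is contractible and the characterization delivers the model space rather than merely a manifold.

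The heart of the matter, and the step I expect to be the main obstacle, is the discrete approximation property: for every open cover $\mathcal{U}$ of $C$ and every sequence of maps $f_k \colon I^{n_k} \to C$ there must exist $\mathcal{U}$-close maps $g_k$ whose images form a discrete family in $C$, together with the disjointness refinement. Here the two hypotheses on $C$ play complementary roles. Convexity supplies flexibility: along any direction $v$ the affine segments $t \mapsto c + tv$ stay in $C$ for small $t$, so a map can be perturbed by small controlled translations while remaining inside $C$ and $\mathcal{U}$-close. Non-local-compactness of $\overline{C}$ supplies room: at a point whose neighbourhoods are not totally bounded one extracts a bounded $\varepsilon$-separated sequence $(v_j)$, an ``infinite-dimensional direction,'' and by convexity this separation can be transported throughout $C$. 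Combining the two, I would push the successive maps $f_k$ into mutually far-apart translates along distinct $v_j$, rendering the family of images locally finite and, on a second pass, pairwise disjoint.

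Two technical points deserve care. First, because $C$ is only $G_\delta$ and not closed, some points of $C$ lie arbitrarily near the missing boundary $\overline{C}\setminus C$, so the perturbations must be chosen small enough, controlled by the complete metric witnessing the $G_\delta$ structure, to avoid leaving $C$; this is precisely where completeness, rather than mere convexity, is used. Second, for the nonseparable conclusion one checks that a locally compact convex subset of a Fr\'echet space is finite-dimensional and hence separable. Consequently, if $C$ is not separable then $\overline{C}$ is automatically not locally compact, the main hypothesis is satisfied, and $\kappa = w(C) > \aleph_0$, so that $C$ is homeomorphic to the nonseparable Hilbert space $\ell_2(\kappa)$. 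This reduces the ``especially'' clause to the instantiation $\kappa > \aleph_0$ of the first assertion.
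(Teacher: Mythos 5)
You should first be aware that the paper contains no proof of this statement: it is imported verbatim from Dobrowolski--Toru\'nczyk \cite{DT2} and Banakh--Cauty \cite{BanC}, so your sketch can only be measured against those papers, whose outer frame you reproduce correctly ($G_\delta$ in a completely metrizable space gives complete metrizability, Dugundji's extension theorem gives AR, Toru\'nczyk's characterization is the target). The genuine gap sits exactly at what you yourself call the heart of the matter. Your flexibility claim --- that ``along any direction $v$ the affine segments $t \mapsto c + tv$ stay in $C$ for small $t$'' --- is false unless $c$ is an interior point of $C$, and the convex sets this theorem is applied to typically have empty interior (the cone $PM(X)$ in this very paper, or the Hilbert cube in $\ell_2$). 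Consequently your scheme of pushing the maps $f_k$ into ``far-apart translates along distinct $v_j$'' collapses: translating points of $C$ by the separated vectors $v_j$ witnessing non-total-boundedness generally leaves $C$ immediately, no matter how small $t$ is. The admissible perturbations are convex combinations $(1-t)f_k + t c_j$ with $c_j \in C$ (and the separated points must be found in $C$ itself, not merely in $\overline{C}$; density repairs this, but you do not say so), and showing that such homotheties can be made simultaneously $\mathcal{U}$-small while producing a discrete family of images is precisely the technical core of \cite{DT2} and \cite{BanC} --- it occupies the bulk of both papers and cannot be waved through as a two-pass refinement.

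Two auxiliary claims also need correction. First, ``a locally compact convex subset of a Fr\'echet space is finite-dimensional'' is false: the Hilbert cube is a compact, infinite-dimensional convex subset of $\ell_2$. What your reduction of the ``especially'' clause actually needs is separability, which holds by a different argument: if $\overline{C}$ were locally compact, pick $c \in \overline{C}$ with a compact neighborhood $N$ in $\overline{C}$; for every $x \in \overline{C}$ one has $c + n^{-1}(x-c) \in N$ for some $n$, so $\overline{C} \subseteq \bigcup_{n \geq 1} \bigl(c + n(N-c)\bigr)$ is $\sigma$-compact, hence separable, contradicting nonseparability of $C$. Second, for $\kappa > \aleph_0$ Toru\'nczyk's characterization of $\ell_2(\kappa)$ is not the cube-sequence discrete approximation property you state: beyond complete metrizability and being an AR of weight $\kappa$, one must approximate maps defined on discrete unions of $\kappa$ many cubes by maps whose restrictions form discrete families, and this stronger $\kappa$-indexed property is what \cite{BanC} actually verifies. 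In short: the skeleton is the right one, but the announced main step is unproved and rests on a false convexity claim, and the nonseparable case needs both a corrected lemma and the stronger form of the characterization theorem.
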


Using this criterion, we will prove Main Theorem and Theorem~\ref{metric}.
It is shown that $PM(X)$ is the closure of $M(X)$ and $AM(X)$ in $C(X^2)$.

\begin{proposition}\label{dense}
For every metrizable space $X$, $AM(X)$ is dense in $PM(X)$.
\end{proposition}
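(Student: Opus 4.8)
The plan is to perturb a given pseudometric by a small multiple of a fixed admissible metric. Since $X$ is metrizable, fix once and for all a bounded admissible metric $d_0 \in AM(X)$; for instance, take any admissible metric and truncate it at $1$, so that $M := \sup\{d_0(x,y) \mid (x,y) \in X^2\} \le 1$. If $X$ is a singleton the statement is trivial, since then $PM(X) = \{0\} = AM(X)$, so assume $M > 0$. Given an arbitrary $\rho \in PM(X)$ and $\varepsilon > 0$, I would consider $d = \rho + t d_0$ for a suitably small $t > 0$ and show that $d \in AM(X)$ with $D(\rho, d) < \varepsilon$.

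First I would check that $d = \rho + t d_0$ is a continuous bounded metric. It is continuous and bounded as a sum of two such functions on $X^2$, it is symmetric and vanishes on the diagonal, and it satisfies the triangle inequality because both summands do. Since $d_0$ separates points and $\rho \ge 0$, the sum $d$ also separates points, so $d \in M(X)$.

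The heart of the argument is to verify that $d$ is admissible, that is, that the metric topology $\tau_d$ coincides with the topology $\tau$ of $X$. One inclusion $\tau_d \subseteq \tau$ follows from continuity of $d$ on $X^2$: each ball $\{y \mid d(x,y) < \delta\}$ is the preimage of $(-\infty, \delta)$ under the $\tau$-continuous map $y \mapsto d(x,y)$, hence $\tau$-open. For the reverse inclusion $\tau \subseteq \tau_d$ I would use the lower bound $d \ge t d_0$, which yields $\{y \mid d(x,y) < tr\} \subseteq \{y \mid d_0(x,y) < r\}$ for every $r > 0$; since $d_0$ is admissible, each $\tau$-neighbourhood of a point $x$ contains some ball $\{y \mid d_0(x,y) < r\}$, and therefore contains the $d$-ball $\{y \mid d(x,y) < tr\}$, so it is a $\tau_d$-neighbourhood. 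Thus $\tau = \tau_{d_0} = \tau_d$ and $d \in AM(X)$.

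Finally, $D(\rho, d) = \sup\{t\, d_0(x,y) \mid (x,y) \in X^2\} = tM$, so choosing any $t < \varepsilon/M$ gives $D(\rho, d) < \varepsilon$. As $\rho$ and $\varepsilon$ were arbitrary, $AM(X)$ is dense in $PM(X)$. I do not anticipate a serious obstacle here; the only point requiring real care is the verification that $d$ is admissible, for which the lower bound $d \ge t d_0$ is the essential ingredient, being exactly what allows the original topology to be recovered from the $d$-balls.
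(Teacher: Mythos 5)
Your proof is correct and takes essentially the same approach as the paper: perturb the given pseudometric by a small fixed bounded admissible metric, and use the resulting lower bound by that metric (together with continuity on $X^2$) to verify admissibility. The only difference is cosmetic --- the paper combines via the pointwise maximum $\max\{d, d_X\}$ with $d_X \leq \epsilon$ rather than the sum $\rho + t d_0$, and you spell out the admissibility check that the paper dismisses as easy (indeed, the paper itself uses your additive perturbation later, in the proof of part (ii) of Theorem~\ref{metric}).
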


\begin{proof}
For each pseudometric $d \in PM(X)$ and each positive number $\epsilon$, we will find an admissible metric $\rho \in AM(X)$ with $D(d,\rho) \leq \epsilon$.
Fix any admissible metric $d_X \in AM(X)$ so that $d_X(x,y) \leq \epsilon$ for all $x, y \in X$.
Define a continuous function $\rho : X^2 \to \R$ by
 $$\rho(x,y) = \max\{d(x,y),d_X(x,y)\}$$
 for each $(x,y) \in X^2$.
It is easy to show that $\rho \in AM(X)$.
Moreover, for every pair $(x,y) \in X^2$,
 when $d(x,y) \leq \epsilon$,
 $$|d(x,y) - \rho(x,y)| = \max\{d(x,y),d_X(x,y)\} - d(x,y) \leq \max\{d(x,y),d_X(x,y)\} \leq \epsilon,$$
 and when $d(x,y) \geq \epsilon$,
 $$|d(x,y) - \rho(x,y)| = \max\{d(x,y),d_X(x,y)\} - d(x,y) = d(x,y) - d(x,y) = 0.$$
Hence $D(d,\rho) \leq \epsilon$.
We conclude that $AM(X)$ is dense in $PM(X)$.
\end{proof}

We shall estimate the densities of $PM(X)$, $M(X)$ and $AM(X)$.

\begin{lemma}\label{extent}
Let $X$ be a metrizable space.
If $X$ contains a closed discrete subset of cardinality $\kappa$,
 then the densities of $PM(X)$, $M(X)$ and $AM(X)$ are greater than or equal to $2^\kappa$.
\end{lemma}

\begin{proof}
Take a closed discrete subset $D = \{x_\gamma \in X \mid \gamma < \kappa\}$ and fix a point $x_0 \in D$.
We can define a pseudometric $d_a \in PM(D)$ for each sequence $a = (a_\gamma)_{\gamma < \kappa} \in \{0,1\}^\kappa$ by
\begin{enumerate}
 \item $d_a(x_0,x_0) = 0$ and $d_a(x_\gamma,x_0) = d_a(x_0,x_\gamma) = a_\gamma$ for any $0 < \gamma < \kappa$,
 \item $d_a(x_\gamma,x_\lambda) = |d_a(x_\gamma,x_0) - d_a(x_\lambda,x_0)|$ for any $0 < \gamma \leq \lambda < \kappa$.
\end{enumerate}
Note that the subset
 $$\{d_a \in PM(D) \mid a \in \{0,1\}^\kappa\}$$
 is closed discrete in $PM(D)$.
The density of $PM(D)$ is greater than or equal to $2^\kappa$,
 and hence so is the one of $AM(D)$ because of Proposition~\ref{dense}.
By Hausdorff's metric extension theorem \cite{Hau}, the restriction
 $$AM(X) \ni d \mapsto d|_{D^2} \in AM(D)$$
 is surjective,
 which implies that the density of $AM(X)$ is also greater than or equal to $2^\kappa$.
\end{proof}

\begin{proposition}\label{density}
Suppose that $X$ is a metrizable space of density $\kappa$.
If $X$ is infinite and generalized compact,
 then the densities of $PM(X)$, $M(X)$ and $AM(X)$ are equal to $2^{< \kappa}$,
 and if $X$ is not generalized compact,
 then their densities are equal to $2^\kappa$.
\end{proposition}

\begin{proof}
In the case that $X$ is generalized compact, by virtue of Corollary~5 and Theorem~8 of \cite{BarC}, $\kappa$ has countable cofinality.
Hence we can write $\kappa = \sup_{n < \aleph_0} \kappa_n$ with $\kappa_n < \kappa$.
Then for each $n < \aleph_0$, there exists a closed discrete set in $X$ of cardinality $\geq \kappa_n$.
Indeed, suppose that any closed discrete subset of $X$ is of cardinality $< \kappa_n$,
 so the density of $X$, which is equal to the extent,
 is less than or equal to $\kappa_n$.
This is a contradiction.
By Lemma~\ref{extent}, the densities of $PM(X)$, $M(X)$ and $AM(X)$ are greater than or equal to $2^{< \kappa}$.
In the case that $X$ is not generalized compact, there exists a closed discrete set in $X$ of cardinality $\kappa$ due to Proposition~2.3 of \cite{Cos}.
Using Lemma~\ref{extent} again, we have that their densities are greater than or equal to $2^\kappa$.
On the other hand, $C(X^2)$ is of density $2^{< \kappa}$ when $X$ is generalized compact,
 and $2^\kappa$ when $X$ is not so by \cite[Theorem~2.6]{Cos}.
Thus the proof is complete.
\end{proof}

Now Main Theorem is shown.

\begin{proof}[Proof of Main Theorem]
First, we shall prove (i).
It is clear that $PM(X)$ is homeomorphic to $\{0\}$ when $X$ is a singleton.
In the case where $\kappa > 1$,
 we show that $PM(X)$ is homeomorphic to $[0,1]^{\kappa(\kappa - 1)/2 - 1} \times [0,1)$.
Let $X = \{x_1, \cdots, x_\kappa\}$.
Since $d(x_i,x_i) = 0$ and $d(x_i,x_j) = d(x_j,x_i)$ for all $1 \leq i \leq j \leq \kappa$,
 $PM(X)$ is homeomorphic to the closed cone
 $$C = \{(d(x_i,x_j))_{1 \leq i < j \leq \kappa} \mid d \in PM(X)\} \subset [0,\infty)^{\kappa(\kappa - 1)/2}.$$
Letting
 $$H = \Bigg\{(z_{(i,j)})_{1 \leq i < j \leq \kappa} \in [0,\infty)^{\kappa(\kappa - 1)/2} \ \Bigg| \ \sum_{1 \leq i < j \leq \kappa} z_{(i,j)} = 1\Bigg\},$$
 we get that $C = [0,\infty) \cdot (C \cap H)$.
As is easily observed,
 $C \cap H$ is compact and convex.
Moreover, the dimension of $C \cap H$ is equal to $\kappa(\kappa - 1)/2 - 1$.
Indeed, $H$ is of dimension $\kappa(\kappa - 1)/2 - 1$,
 and the $(\kappa(\kappa - 1)/2 - 1)$-dimensional convex set
 $$\Bigg\{\sum_{1 \leq i < j \leq \kappa} (2s_{(i,j)}/(\kappa(\kappa - 1) -2)) \cdot w^{(i,j)} \ \Bigg| \ s_{(i,j)} \geq 0 \text{ and } \sum_{1 \leq i < j \leq \kappa} s_{(i,j)} = 1\Bigg\},$$
 where let $w^{(i,j)} = (w_{(k,l)})_{1 \leq k < l \leq \kappa} \in [0,\infty)^{\kappa(\kappa - 1)/2}$ be defined by
 $$w_{(k,l)} = \left\{
 \begin{array}{ll}
  0 & \text{ if } (k,l) = (i,j)\\
  1 & \text{ if } (k,l) \neq (i,j),
 \end{array}
 \right.$$
 is contained in $C \cap H$.
It follows that $C \cap H$ is homeomorphic to $[0,1]^{\kappa(\kappa - 1)/2 - 1}$,
 and hence $PM(X)$ is homeomorphic to $[0,1]^{\kappa(\kappa - 1)/2 - 1} \times [0,1)$.

Next, we show (ii).
Assume that $X$ is infinite and generalized compact.
To show that $PM(X)$ is not locally compact, fix any pseudometric $d \in PM(X)$ and any neighborhood $U$ of $d$.
By virtue of Arzer\`{a} - Ascoli's Theorem, we need only to prove that $U$ is not equicontinuous,
 that is, there exists a point $(x,y) \in X^2$ and a positive number $\epsilon > 0$ such that for each neighborhood $V$ of $(x,y)$,
 $$\rho(x',y') \notin (\rho(x,y) - \epsilon,\rho(x,y) + \epsilon)$$
 for some $\rho \in U$ and some $(x',y') \in V$.
Take $\epsilon > 0$ such that if $D(d,d') \leq \epsilon$,
 then $d' \in U$.
Since $X$ is infinite and generalized compact,
 we can choose a point $x \in X$ all of whose neighborhoods are of density $\geq \kappa$ according to \cite[Lemma~11]{BarC},
 which implies that $x$ is not isolated.
Hence for any neighborhood $V$ of $(x,x)$ in $X^2$, there is $y \in X$ such that $x \neq y$ and $(x,y) \in V$.
Define an admissible metric $e \in AM(\{x, y\})$ as follows:
 $$e(x,x) = 0, \ e(y,y) = 0, \ e(x,y) = e(y,x) = \epsilon.$$
Applying Hausdorff's metric extension theorem \cite{Hau}, we can obtain an admissible metric $\tilde{e} \in AM(X)$ so that $\tilde{e}|_{\{x,y\}^2} = e$ and $\tilde{e}(u,v) \leq \epsilon$ for any $(u,v) \in X^2$.
Let $\rho \in AM(X)$ be an admissible metric defined by $\rho = d + \tilde{e}$.
Then for each $(u,v) \in X^2$,
 $$|d(u,v) - \rho(u,v)| = |d(u,v) - (d(u,v) + \tilde{e}(u,v))| = \tilde{e}(u,v) \leq \epsilon.$$
Therefore we have $D(d,\rho) \leq \epsilon$,
 which implies that $\rho \in U$.
Moreover,
 $$\rho(x,y) = d(x,y) + \tilde{e}(x,y) \geq \tilde{e}(x,y) = \epsilon = \rho(x,x) + \epsilon.$$
Consequently, $U$ is not equicontinuous,
 and hence $PM(X)$ is not locally compact.
According to Proposition~\ref{density}, $PM(X)$ is of density $2^{< \kappa}$.
It follows from Theorem~\ref{convex} that $PM(X)$ is homeomorphic to $\ell_2(2^{< \kappa})$.

Combining Theorem~\ref{convex} with Proposition~\ref{density}, we can establish (iii).
The proof is complete.
\end{proof}

Recall that $PM(X)$ is the closure of $M(X)$ and $AM(X)$ by Proposition~\ref{dense}.
When $X$ is a separable metrizable space,
 the compactness of it coincides with the generalized compactness of it.
Hence as is seem in the above proof,
 $PM(X)$ is not locally compact.
Applying Theorem~\ref{convex}, we can prove Theorem~\ref{metric}.

\end{document}